\newcommand{\Spec}[0]{{\operatorname{Spec}}}
\newtheorem{thm}{Theorem}[section]
\newtheorem{lem}[thm]{Lemma}
\newtheorem{rem}[thm]{Remark}
\newtheorem{cor}[thm]{Corollary}
\newtheorem{ques}[thm]{Question}
\theoremstyle{definition}
\newtheorem{defi}[thm]{Definition}
\newtheorem*{ack}{Acknowledgments} 
\newtheorem*{notation}{Notation} 
\theoremstyle{remark}
\newtheorem{step}{Step}
\title[Injectivity theorem for globally $F$-regular varieties]{Koll\'ar's injectivity theorem for globally $F$-regular varieties}
\author{Yoshinori  Gongyo}
\author{Shunsuke Takagi}
\address{Graduate School of Mathematical Sciences, the University of Tokyo, 3-8-1 Komaba, Meguro-ku, Tokyo 153-8914, Japan.}
\email{gongyo@ms.u-tokyo.ac.jp}
\email{stakagi@ms.u-tokyo.ac.jp}
\subjclass[2010]{14F17, 13A35}
\keywords{injectivity theorem, vanishing theorem, globally $F$-regular varieties}
\begin{document}
\bibliographystyle{amsalpha+}
 
 \maketitle
 
 \begin{abstract}
We prove Koll\'ar's injectivity theorem for globally $F$-regular varieties. 
\end{abstract}
 
\section{Introduction}
The following injectivity theorem for semi-ample line bundles was proved by Koll\'ar \cite[Theorem 2.2]{ko}: 
   
\begin{thm}[Koll\'ar's injectivity theorem]\label{kollar-inj-char0}
Let $X$ be an $n$-dimensional  smooth projective variety over an algebraically closed field of characteristic zero and $\mathcal{L}$ be a semi-ample line bundle on $X$, that is, $\mathcal{L}^{\otimes r}$ is base point free for some positive integer $r$. 
Let $s$ be a non-zero section in $H^0(X, \mathcal{L}^{\otimes \ell})$ for some positive integer $\ell$. 
Then the map 
\[
\times s: H^i(X, \omega_X \otimes \mathcal{L}^{\otimes m}) \to  H^i(X, \omega_X \otimes \mathcal{L}^{\otimes (\ell+m)})
\] 
induced by the tensor product with $s$ is injective for all $m\geq1$ and $i\geq0$, where $\omega_X$ is the canonical line bundle on $X$. 
\end{thm}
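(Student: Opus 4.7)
My plan is to follow Kollár's original approach, which reduces injectivity to the torsion-freeness of higher direct images of the canonical sheaf; Hodge theory is used only in proving that torsion-freeness.

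First, I would exploit semi-ampleness: pick $r$ so that $\mathcal{L}^{\otimes r}$ is base-point free, and let $\phi\colon X \to Z$ be the associated morphism (after Stein factorization $\phi_*\mathcal{O}_X = \mathcal{O}_Z$), so that $\phi^*\mathcal{O}_Z(1) \cong \mathcal{L}^{\otimes r}$. After standard reductions (replacing $s$ by a suitable power and adjusting $m$ to clear denominators against $r$, noting that injectivity of $\times s^k$ implies injectivity of $\times s$), one may arrange $r \mid \ell$ and $r \mid m$, so that $s$ corresponds under $H^0(X, \mathcal{L}^{\otimes\ell}) = H^0(Z, \mathcal{O}_Z(\ell/r))$ to a non-zero section $s_Z \in H^0(Z, \mathcal{O}_Z(\ell/r))$.

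Second, I would invoke two Hodge-theoretic results of Kollár: (i) the higher direct image $R^q \phi_* \omega_X$ is torsion-free on $Z$, and (ii) Kollár's vanishing $H^i(Z, R^q \phi_* \omega_X \otimes \mathcal{O}_Z(k)) = 0$ for $i > 0$ and $k$ positive, together with $E_2$-degeneration of the Leray spectral sequence for $\omega_X$ twisted by an ample line bundle pulled back from $Z$. Combined with the projection formula, these yield a direct-sum decomposition
\[
H^i(X, \omega_X \otimes \mathcal{L}^{\otimes m}) \;\cong\; \bigoplus_{q} H^{i-q}\bigl(Z, R^q \phi_* \omega_X \otimes \mathcal{O}_Z(m/r)\bigr).
\]
Under this decomposition, the map $\times s$ is identified with multiplication by $s_Z$ on each summand. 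Torsion-freeness of $R^q \phi_* \omega_X$ makes multiplication by $s_Z$ injective at the sheaf level; a short diagram chase, combined with Kollár's vanishing in sufficiently high twist to kill the intermediate obstructions, upgrades this to injectivity on each $H^{i-q}(Z, \cdot)$, which is the sought-after conclusion on $X$.

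The main obstacle I anticipate is step two: the torsion-freeness of $R^q \phi_* \omega_X$ and the attendant vanishing and Leray-degeneration results are the true substance of the argument, ultimately resting on Deligne's mixed Hodge theory and the $E_1$-degeneration of the Hodge-to-de Rham spectral sequence. A cleaner and more self-contained alternative, which I might prefer in a final write-up, is the Esnault--Viehweg cyclic-cover approach via the logarithmic de Rham complex of the $\mathbb{Q}$-divisor $\tfrac{1}{\ell}\mathrm{div}(s)$: there the injectivity falls out directly from $E_1$-degeneration of a twisted Hodge-to-de Rham spectral sequence on a log resolution, with no Leray bookkeeping required.
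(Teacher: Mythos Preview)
The paper does not prove this theorem: Theorem~\ref{kollar-inj-char0} is quoted as background from \cite{ko}, and the paper's own contribution is the positive-characteristic analogue for globally $F$-regular varieties (Theorem~\ref{kollar-inj}). There is thus no proof in the paper to compare your proposal against.

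That said, your outline is a correct sketch of Koll\'ar's original argument. One simplification worth making: once Koll\'ar's vanishing $H^{i}(Z, R^q\phi_*\omega_X \otimes \mathcal{O}_Z(k)) = 0$ for $i>0$ and $k>0$ is in hand, the Leray spectral sequence collapses to give $H^i(X, \omega_X \otimes \mathcal{L}^{\otimes m}) \cong H^0(Z, R^i\phi_*\omega_X \otimes \mathcal{O}_Z(m/r))$ directly, and injectivity of $\times s_Z$ on $H^0$ is then immediate from torsion-freeness of $R^i\phi_*\omega_X$; no further diagram chase or high-twist step is needed. If you want a point of comparison, the paper's proof of Theorem~\ref{kollar-inj} follows exactly the same architecture you describe---pass to the semi-ample fibration, show the relevant higher direct image is torsion-free, and reduce injectivity on $X$ to injectivity on $H^0$ over the base---but substitutes Frobenius-splitting and Cohen--Macaulayness arguments for the Hodge-theoretic inputs.
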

Koll\'ar's injectivity theorem can be viewed as a generalization of the Kodaira vanishing theorem. Indeed, combining Theorem \ref{kollar-inj-char0} with the Serre vanishing theorem, we can easily recover the Kodaira vanishing theorem. 
The reader is referred to \cite{fujino2} for a recent development of such an injectivity theorem in characteristic zero. 

This paper discusses what happens if the variety is defined over a field of positive characteristic.  
Koll\'ar's proof of Theorem \ref{kollar-inj-char0} depends on the Hodge decomposition and does not work in positive characteristic. 
Indeed, Raynaud \cite{raynaud} gave a counterexample to the Kodaira vanishing theorem in positive characteristic. 
On the other hand, Mehta-Ramanathan \cite{mehta--ramanathan} proved that the Kodaira vanishing theorem holds for (Cohen-Macaulay) globally $F$-split varieties, that is, for projective varieties in positive characteristic whose absolute Frobenius morphisms split globally.  
Moreover, Fujino \cite{fujino} proved that Koll\'ar's injectivity theorem holds for projective toric varieties in any characteristic. 
Since projective toric varieties are globally $F$-split in positive characteristic, it is natural to ask the following question: 
\begin{ques}\label{q1}
Does Koll\'ar's injectivity theorem hold for globally $F$-split varieties?
\end{ques} 

Inspired by the theory of $F$-singularities, Smith \cite{smith} introduced the notion of globally $F$-regular varieties (see Definition \ref{defi-gFreg} for the definition), a special class of globally $F$-split varieties. 
Projective toric and Schubert varieties in positive characteristic are examples of globally $F$-regular varieties (see \cite{smith}, \cite{LRPT} and \cite{Has}). 
A weak form of the Kawamata-Viehweg vanishing theorem \cite[Corollary 4.4]{smith}, which is stronger than the Kodaira vanishing theorem, holds for globally $F$-regular varieties.  
In this paper, as a partial positive answer to Question \ref{q1}, we prove that Koll\'ar's injectivity theorem holds for globally $F$-regular varieties: 

\begin{thm}[\textup{cf.~Theorem \ref{kollar-inj}}]\label{kollar-inj-intro}
Let $X$ be an $n$-dimensional globally $F$-regular projective variety over an $F$-finite field of characteristic $p>0$ and  $\mathcal{L}$ be a semi-ample line bundle on $X$. 
Let $s$ be a non-zero section in $H^0(X, \mathcal{L}^{\otimes \ell})$ for some positive integer $\ell$. Then the map 
\[
\times s: H^i(X, \omega_X \otimes \mathcal{L}^{\otimes m}) \to  H^i(X, \omega_X \otimes \mathcal{L}^{\otimes (\ell+m)})
\] 
induced by the tensor product with $s$ is injective for all $m\geq1$ and $i \geq 0$. 
Moreover, $H^j(X, \omega_X \otimes \mathcal{L}^{\otimes m})=0$ for all $m\geq1$ and  $j \not= n-\kappa$, where $\kappa= \kappa(X, \mathcal{L})$ is the Iitaka dimension of $\mathcal{L}$. 
\end{thm}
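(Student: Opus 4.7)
The strategy is to adapt Smith's Frobenius-splitting proof of Kawamata--Viehweg vanishing for globally $F$-regular varieties \cite{smith} to the injectivity setting, combining the Grothendieck trace arising from the globally $F$-split hypothesis with the stronger splittings furnished by $F$-regularity, in the spirit of Fujino's argument for toric varieties \cite{fujino}.

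Set $D=\operatorname{div}(s)$, so that $\mathcal{O}_X(D)\simeq \mathcal{L}^{\otimes \ell}$. Global $F$-regularity provides, for every sufficiently large $e$, an $\mathcal{O}_X$-linear splitting of the composition $\mathcal{O}_X\to F^e_*\mathcal{O}_X\xrightarrow{\cdot s^{p^e-1}} F^e_*\mathcal{O}_X((p^e-1)D)$. Grothendieck duality for $F^e$ (using $(F^e)^!\omega_X=\omega_X$) converts this into a split surjection $F^e_*(\omega_X(-(p^e-1)D))\twoheadrightarrow \omega_X$ that factors, via the inclusion induced by multiplication by $s^{p^e-1}$, through the Grothendieck trace $T^e\colon F^e_*\omega_X\to \omega_X$. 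After twisting by $\mathcal{L}^{\otimes m}$ and $\mathcal{L}^{\otimes(m+\ell)}$ and applying the projection formula, one obtains the commutative square
\[
\begin{CD}
H^i(X,\omega_X\otimes \mathcal{L}^{\otimes mp^e}) @>\times s^{p^e}>> H^i(X,\omega_X\otimes \mathcal{L}^{\otimes(m+\ell)p^e}) \\
@V T^e_m VV @V T^e_{m+\ell} VV \\
H^i(X,\omega_X\otimes \mathcal{L}^{\otimes m}) @>\times s>> H^i(X,\omega_X\otimes \mathcal{L}^{\otimes(m+\ell)})
\end{CD}
\]
whose vertical trace maps are split surjective, together with a finer split surjection $H^i(X,\omega_X\otimes \mathcal{L}^{\otimes (mp^e+\ell)})\twoheadrightarrow H^i(X,\omega_X\otimes \mathcal{L}^{\otimes(m+\ell)})$ compatible with the bottom row. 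In the big-and-nef case ($\kappa=n$), Smith's Kawamata--Viehweg vanishing makes the targets vanish for $i>0$, so the bottom groups vanish too, and the injectivity is trivial; the $i=0$ injectivity is immediate from torsion-freeness of $\omega_X\otimes \mathcal{L}^{\otimes m}$.

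For general semi-ample $\mathcal{L}$ and the moreover statement, I pass to the Iitaka fibration $\phi\colon X\to Y$ with $\mathcal{L}^{\otimes r}=\phi^*A$ for some ample $A$ on the $\kappa$-dimensional base $Y$. Projection formula, Leray, and Serre vanishing on $Y$ reduce, for $m\gg 0$ divisible by $r$, the cohomology $H^j(X,\omega_X\otimes \mathcal{L}^{\otimes m})$ to $H^0(Y,R^j\phi_*\omega_X\otimes A^{\otimes m/r})$. The sheaf $R^j\phi_*\omega_X$ vanishes for $j>n-\kappa$ by dimension, while for $j<n-\kappa$ the general fibre $F$ of $\phi$ inherits global $F$-regularity from $X$ (so is of Fano type), so Serre duality on $F$ combined with $H^i(F,\mathcal{O}_F)=0$ for $i>0$ (a consequence of the $F$-rationality of globally $F$-regular varieties) forces $H^j(F,\omega_F)=0$ for $j<\dim F$; combined with a torsion-freeness argument for $R^j\phi_*\omega_X$ (again drawing on the splittings from global $F$-regularity), this yields the moreover vanishing and reduces the injectivity in degree $n-\kappa$ to injectivity of multiplication by a nonzero section on the global sections of the torsion-free sheaf $R^{n-\kappa}\phi_*\omega_X\otimes A^{\otimes m/r}$. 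Propagation of the large-$m$ vanishing to arbitrary $m\geq 1$ then follows from the injectivity itself.

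The chief obstacle is establishing the torsion-freeness of $R^j\phi_*\omega_X$ and the inheritance of global $F$-regularity by the fibres of the Iitaka fibration; these are the new ingredients beyond Smith's argument and are essential for handling the case $\kappa<n$, where the target line bundle is not big and Smith's vanishing cannot be applied directly.
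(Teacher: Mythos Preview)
Your outline has a genuine gap in the case $\kappa<n$, and you in fact flag it yourself at the end: the two ``new ingredients'' you need --- that general fibres of the Iitaka fibration $\phi$ inherit global $F$-regularity, and that $R^j\phi_*\omega_X$ is torsion-free --- are not established anywhere in your argument, and neither follows from the tools you have set up. The fibre-inheritance claim is not a known result (global $F$-regularity passes to images with connected fibres, but not to fibres in general), and even granting it, the Iitaka fibration need not be flat, so vanishing of $H^j(F,\omega_F)$ on a general fibre does not by itself give vanishing of $R^j\phi_*\omega_X$; you would still need exactly the torsion-freeness you have not proved. The reduction via Leray and Serre vanishing also only handles $m$ divisible by $r$, so you really need control over $R^j\phi_*(\omega_X\otimes\mathcal{L}^{\otimes b})$ for all residues $0\le b<r$, not just $R^j\phi_*\omega_X$.

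The paper avoids fibre-wise reasoning entirely. Its key step is to show directly that $f_*\mathcal{L}^{\otimes(-m)}$ is a maximal Cohen--Macaulay $\mathcal{O}_Y$-module: one chooses (via Lemma~\ref{CM lemma}) a nonzero $c\in\mathcal{O}_Y$ annihilating the low-degree local cohomology of the finitely many sheaves $f_*\mathcal{L}^{\otimes(-b)}$ for $0\le b<r$, then uses the splitting of $\mathcal{O}_X\to F^e_*\mathcal{O}_X(\mathrm{div}_X(c))$ furnished by global $F$-regularity of $X$ (not of any fibre) to inject $H^i_{\{y\}}(f_*\mathcal{L}^{\otimes(-m)})$ into a group on which $\times c$ is zero. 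Grothendieck duality then converts the MCM property of $f_*\mathcal{L}^{\otimes(-m)}$ into the statement that $R^if_*(\omega_X\otimes\mathcal{L}^{\otimes m})$ vanishes for $i\ne n-\kappa$ and is MCM (hence torsion-free) for $i=n-\kappa$. The injectivity and the vanishing then follow from Leray and Serre exactly as you anticipated. So the missing idea is: dualize the problem to a Cohen--Macaulayness statement about $f_*\mathcal{L}^{\otimes(-m)}$, which can be attacked globally on $X$ without ever looking at fibres.
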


We can also prove a relative version of Theorem \ref{kollar-inj-intro} (see Theorem \ref{kollar-inj}). 
 The proof is inspired by Fujino's idea of using multiplication maps to prove various vanishing theorems on toric varieties (see \cite{fujino}), but we focus more attention on the Cohen-Macaulayness of the higher direct images of the adjoint bundle $\omega_X \otimes \mathcal{L}^{\otimes m}$ under the semi-ample fibration induced by $\mathcal{L}$. 

\begin{ack}
The authors were partially supported by JSPS KAKENHI $\#$26707002, 15H03611, 16H02141 and 17H02831.
They are grateful to Sho Ejiri, Nobuo Hara, Kenta Sato and Ken-ichi Yoshida for helpful comments. 
The authors are also indebted to the referee for thoughtful suggestions. 
The first author would like to thank the organizers of ``Workshop in Algebraic Geometry" held in Hanga Roa, Chile on December 18--22, 2016. 
\end{ack}

\begin{notation}
Throughout this paper, all rings are commutative rings with unity.  
A variety over a field $k$ means an integral separated scheme of finite type over $k$. 
We use without explanation standard notation and conventions of the book \cite{komo}. 
\end{notation}

 \section{Preliminaries and lemmas}\label{preliminaries}

In this section, we briefly review the definition and basic properties of globally $F$-regular varieties introduced by Smith \cite{smith}. 

Recall that  a field $k$ of characteristic $p>0$ is said to be \textit{$F$-finite} if the extension degree $[k:k^p]$ is finite. 

\begin{defi}[\textup{\cite{mehta--ramanathan}, \cite{smith}, \cite{HX}}]\label{defi-gFreg} 
Let $X$ be a normal variety over an $F$-finite field $k$ of characteristic $p>0$. 
\begin{enumerate}\renewcommand{\labelenumi}{$($\textup{\roman{enumi}}$)$}
\item $X$ is said to be \textit{globally $F$-split} if the Frobenius map $\mathcal{O}_X \to F^e_*\mathcal{O}_X$ splits as an $\mathcal{O}_X$-module homomorphism. 
\item $X$ is said to be \textit{globally $F$-regular} if for every effective Weil divisor $D$ on $X$, there exists an integer $e \ge 1$ such that the composite map 
\[
\mathcal{O}_X \to F^e_*\mathcal{O}_X \hookrightarrow F^e_*\mathcal{O}_X(D)
\]
of the $e$-times iterated Frobenius map $\mathcal{O}_X \to F^e_*\mathcal{O}_X$ and the natural inclusion $F^e_*\mathcal{O}_X \hookrightarrow F^e_*\mathcal{O}_X(D)$ 
splits as an $\mathcal{O}_X$-module homomorphism. 
\item 
Let $f: X \to T$ be a projective morphism to a variety $T$ over $k$. 
We say that $X$ is \textit{globally $F$-regular} over $T$ if there exists an affine covering $\{U_i\}_{i \in I}$ of $T$ such that $f^{-1}(U_i)$ is globally $F$-regular for all $i \in I$. 
\end{enumerate}
\end{defi}

\begin{rem}\label{global to local}
 Let $X$, $T$ and $k$ be as in Definition \ref{defi-gFreg}. 
 
$(1)$ It follows from an argument similar to the proof of \cite[Theorem 3.1]{HH} that Definition \ref{defi-gFreg} (iii) is independent of the choice of the affine covering $\{U_i\}_{i \in I}$. 
In particular, when $T$ is affine, $X$ is globally $F$-regular over $T$ if and only if $X$ is globally $F$-regular. 
 
$(2)$ Let $T \to S$ be a projective morphism of varieties over $k$.  
If $X$ is globally $F$-regular over $S$, then $X$ is globally $F$-regular over $T$. 
 
$(3)$ Globally $F$-regular varieties are Cohen-Macaulay (see for example \cite[Proposition 4.1]{smith}). 
\end{rem}

The following lemma is proved by an argument similar to \cite[Corollary 4.3]{smith}, but we include the proof here for the sake of completeness. 

\begin{lem}\label{vanishing}Let $f:X \to T$ be a projective morphism from a normal variety $X$ to a variety $T$ over an $F$-finite field $k$ of characteristic $p>0$. 
Suppose that  $\mathcal{L}$ is an $f$-nef line bundle on $X$.
If $X$ is globally $F$-regular over $T$, then $\mathbf{R}^if_*\mathcal{L}=0$ for all $i>0$.
\end{lem}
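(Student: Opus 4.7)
The plan is to reduce to the case where $T$ is affine and then combine a splitting coming from global $F$-regularity with Fujita's vanishing theorem. Since $\mathbf{R}^if_*\mathcal{L}$ is computed locally on $T$ and the hypothesis that $X$ be globally $F$-regular over $T$ reduces to the absolute property over each affine open of $T$ by Remark \ref{global to local}~(1), I may assume $T = \Spec R$ is affine and that $X$ itself is globally $F$-regular. In this situation every proper curve in $X$ lies in a fiber of $f$, so ``$f$-nef'' coincides with ``nef'' on $X$, and it suffices to prove $H^i(X, \mathcal{L}) = 0$ for every $i > 0$.

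Next I fix an ample line bundle $\mathcal{A}$ on $X$, which exists because $f$ is projective and $T$ is affine, and invoke Fujita's vanishing theorem to produce an integer $N_0 \ge 1$ with $|\mathcal{A}^{\otimes N_0}| \ne \emptyset$ such that
\[
H^i(X,\, \mathcal{A}^{\otimes N_0} \otimes \mathcal{P}) = 0
\]
for all $i > 0$ and all nef line bundles $\mathcal{P}$ on $X$. Choosing any effective Weil divisor $D$ with $\mathcal{O}_X(D) \cong \mathcal{A}^{\otimes N_0}$ (possible since $X$ is normal), the definition of global $F$-regularity yields some $e \ge 1$ for which the composite $\mathcal{O}_X \to F^e_*\mathcal{O}_X \hookrightarrow F^e_*\mathcal{O}_X(D)$ splits as $\mathcal{O}_X$-modules. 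Tensoring with $\mathcal{L}$ and applying the projection formula gives a split injection
\[
\mathcal{L} \hookrightarrow F^e_*\bigl(\mathcal{L}^{\otimes p^e} \otimes \mathcal{O}_X(D)\bigr),
\]
and since $F^e$ is affine, taking $H^i$ produces a split injection
\[
H^i(X, \mathcal{L}) \hookrightarrow H^i\bigl(X,\, \mathcal{L}^{\otimes p^e} \otimes \mathcal{A}^{\otimes N_0}\bigr).
\]
The target vanishes by the choice of $N_0$ because $\mathcal{L}^{\otimes p^e}$ is nef, so $H^i(X, \mathcal{L}) = 0$.

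The main obstacle is the order of quantifiers. Global $F$-regularity hands us $e$ only after $D$ has been fixed, so we have no control over how large $e$ becomes, and hence none over the power of $\mathcal{L}$ that eventually appears on the right-hand side. Fujita's vanishing theorem is tailored exactly to this situation: it supplies a single twist by $\mathcal{A}^{\otimes N_0}$ that kills the higher cohomology of \emph{every} nef line bundle uniformly, and in particular of $\mathcal{L}^{\otimes p^e}$ no matter what $e$ turns out to be. Without this uniformity one would be left with a dependence on $e$ that the defining property of globally $F$-regular varieties does not govern.
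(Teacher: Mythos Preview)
Your proof is correct. Both your argument and the paper's reduce to an affine base and then, via global $F$-regularity applied to a chosen effective divisor, produce a split injection $H^i(X,\mathcal{L}) \hookrightarrow H^i(X,\mathcal{L}^{\otimes p^e}\otimes(\text{something ample}))$; the issue, as you correctly identify, is to guarantee the target vanishes even though $e$ is not under our control. You resolve this by invoking Fujita's vanishing theorem, which kills the higher cohomology of $\mathcal{A}^{\otimes N_0}\otimes\mathcal{P}$ uniformly over all nef $\mathcal{P}$. The paper instead avoids Fujita by a two-step bootstrap: it first proves $H^i(X,\mathcal{L}^{\otimes m}(H))=0$ for \emph{every} $m\ge 0$ using only global $F$-splitting and ordinary Serre vanishing (each $m$ treated separately, exploiting that $\mathcal{L}^{\otimes m}(H)$ is itself $f$-ample so high $p$-th powers have vanishing cohomology), and only then applies global $F$-regularity to the single divisor $H$. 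Your route is shorter and more direct, at the cost of importing Fujita vanishing as a black box; the paper's route is more self-contained, needing nothing beyond Serre vanishing and the defining splittings.
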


\begin{proof}
We may assume that $T$ is affine, say $T= \mathrm{Spec}\; R$, and $X$ is globally $F$-regular. 
Let $H$ be an $f$-ample effective Cartier divisor on $X$. 
First, we show that $H^i(X, \mathcal{L}^{\otimes m}(H))=0$  
for all $m \ge 0$ and $i>0$.
Indeed, $H^i(X,  (\mathcal{L}^{\otimes m}(H))^{\otimes p^e})=0$ for all $i>0$ and all sufficiently large $e$ by the Serre vanishing theorem.  
On the other hand,  
since $X$ is globally $F$-regular, 
we have an injective $R$-homomorphism
\[
H^i(X, \mathcal{L}^{\otimes m}(H)) \hookrightarrow H^i(X, F^{e}_*\mathcal{O}_X \otimes (\mathcal{L}^{\otimes m}(H))) \cong H^i(X,  (\mathcal{L}^{\otimes m}(H))^{\otimes p^e})
\] 
for all $i \ge 0$ and all $e \ge 1$. Thus, we obtain the desired vanishing. 

Now we will show that $H^i(X, \mathcal{L})=0$ for all $i>0$. 
There exists an integer $e \ge 1$ such that we have an injective $R$-homomorphism
\[
H^i(X, \mathcal{L}) \hookrightarrow H^i(X, F_*^{e}\mathcal{O}_X(H)\otimes \mathcal{L}) \cong H^i(X, \mathcal{L}^{\otimes p^e}(H))
\] 
for all $i \ge 0$, because  $X$ is  globally $F$-regular. 
Therefore, it follows from the above vanishing that $H^i(X, \mathcal{L})=0$ for all $i>0$. 
\end{proof}

As a corollary of Lemma \ref{vanishing}, we obtain a Grauert-Riemenschneider type vanishing theorem for globally $F$-regular varieties. 
\begin{cor}\label{gr type vanishing}
Let $f:X \to Y$ be  a surjective projective morphism of normal  varieties over an $F$-finite field of characteristic $p>0$ with $f_*\mathcal{O}_X=\mathcal{O}_Y$. 
Suppose that $X$ is globally $F$-regular over $Y$.  
Then $\mathbf{R}^if_*\mathcal{O}_X=0$ for all $i\geq1$. 
Moreover, it holds that $\mathbf{R}^{n-m}f_*\omega_X=\omega_Y$ and $\mathbf{R}^{i}f_*\omega_X=0$ for all $i \not = n-m$, where $\mathrm{dim}\,X=n $ and $\mathrm{dim}\,Y=m $.
\end{cor}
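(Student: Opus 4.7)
The first claim, $\mathbf{R}^i f_* \mathcal{O}_X = 0$ for $i \geq 1$, follows directly by applying Lemma \ref{vanishing} to the trivially $f$-nef line bundle $\mathcal{L} = \mathcal{O}_X$. Combined with $f_*\mathcal{O}_X = \mathcal{O}_Y$, this yields the identification $\mathbf{R} f_* \mathcal{O}_X \simeq \mathcal{O}_Y$ in $D^b(Y)$.

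For the statements about $\omega_X$, the plan is to invoke Grothendieck duality for the projective morphism $f$. Applied to $\mathcal{O}_X$, duality gives
\[
\mathbf{R} f_* \omega_X^\bullet \;\simeq\; \mathbf{R}\mathcal{H}om_{\mathcal{O}_Y}(\mathbf{R} f_* \mathcal{O}_X, \omega_Y^\bullet) \;\simeq\; \omega_Y^\bullet,
\]
using the first part. Since $X$ is Cohen-Macaulay by Remark \ref{global to local}(3), $\omega_X^\bullet \simeq \omega_X[n]$, so passing to cohomology sheaves yields the uniform identification $\mathbf{R}^i f_* \omega_X \simeq \mathcal{H}^{i-n}(\omega_Y^\bullet)$ for every $i$. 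Because $Y$ is a normal variety of dimension $m$, the dualizing complex $\omega_Y^\bullet$ is supported in degrees $[-m,0]$ with lowest cohomology $\mathcal{H}^{-m}(\omega_Y^\bullet) = \omega_Y$, from which the identification $\mathbf{R}^{n-m} f_* \omega_X \simeq \omega_Y$ and the vanishing of $\mathbf{R}^i f_* \omega_X$ for $i < n-m$ are immediate.

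The main obstacle is the vanishing for $i > n-m$, which, via the identification above, is equivalent to the Cohen-Macaulayness of $Y$. My plan is to show that $Y$ itself is globally $F$-regular and hence Cohen-Macaulay by Remark \ref{global to local}(3). Concretely, for any affine open $U \subseteq Y$ and any effective Weil divisor $D$ on $U$, one lifts $D$ to an appropriate effective Weil divisor $\widetilde D$ on $f^{-1}(U)$ and uses the splitting of $\mathcal{O}_X \to F_*^e \mathcal{O}_X(\widetilde D)$ provided by the global $F$-regularity of $f^{-1}(U)$; pushing this splitting down via $f_*$ and combining with $f_*\mathcal{O}_X = \mathcal{O}_Y$ together with a projection-formula-type identification $f_* F_*^e \mathcal{O}_X(\widetilde D) \simeq F_*^e \mathcal{O}_Y(D)$ produces the desired splitting $\mathcal{O}_Y \to F_*^e \mathcal{O}_Y(D)$ on $U$, finishing the argument.
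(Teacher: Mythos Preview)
Your approach is essentially the paper's: both reduce to Grothendieck duality and the Cohen--Macaulayness of $X$ and $Y$, yielding $\mathbf{R}f_*\omega_X[n]\simeq\omega_Y[m]$. The only difference is that the paper obtains the Cohen--Macaulayness of $Y$ by citing \cite[Proposition~1.2]{HWY} (which shows $Y$ is globally $F$-regular), whereas you sketch that argument yourself.

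Your sketch has one imprecision worth fixing. The claimed identification $f_* F_*^e \mathcal{O}_X(\widetilde D) \simeq F_*^e \mathcal{O}_Y(D)$ does not hold for an arbitrary ``lift'' $\widetilde D$ of a Weil divisor $D$: the projection formula requires a Cartier pullback. The standard repair is to first enlarge $D$ to a principal (hence Cartier) divisor $D'\ge D$ on the affine $U$, set $\widetilde D=f^*D'$, and then use $f_*\mathcal{O}_X(f^*D')\cong \mathcal{O}_Y(D')\otimes f_*\mathcal{O}_X=\mathcal{O}_Y(D')$ together with $f_*F_*^e=F_*^e f_*$ to get a genuine splitting of $\mathcal{O}_Y\to F_*^e\mathcal{O}_Y(D')$; this in turn forces the splitting of $\mathcal{O}_Y\to F_*^e\mathcal{O}_Y(D)$. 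With that adjustment your argument is complete and matches the paper's.
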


\begin{proof}
The former statement immediately follows from Lemma \ref{vanishing}. 
We will show the latter statement. 
We may assume that $Y$ is affine and $X$ is globally $F$-regular. 
Note that by Remark \ref{global to local} (3) and \cite[Proposition 1.2]{HWY}, $X$ and $Y$ are Cohen-Macaulay. 
Let $\omega_Y^\bullet$ and $\omega_X^\bullet:=f^{!}\omega_Y^{\bullet}$ be normalized dualizing complexes on $Y$ and $X$, respectively.  
Since $X$ is Cohen-Macaulay, 
\[
\mathbf{R}\mathscr{H}\kern -.5pt om_{\mathcal{O}_X} (\mathcal{O}_X, \omega_X^\bullet) \cong \omega_X^{\bullet} \cong \omega_X[n]. 
\]
Similarly, since $Y$ is Cohen-Macaulay and $\mathcal{O}_Y \cong \mathbf{R}f_*\mathcal{O}_X$, 
\[
\mathbf{R}\mathscr{H}\kern -.5pt om_{\mathcal{O}_Y}(\mathbf{R}f_*\mathcal{O}_X, \omega_Y^\bullet)  \cong \mathbf{R}\mathscr{H}\kern -.5pt om_{\mathcal{O}_Y}(\mathcal{O}_Y, \omega_Y[m]) \cong \omega_Y[m]. 
\]
Thus, by the Grothendieck duality, we have natural isomorphisms
\begin{align*}
\mathbf{R}f_* \omega_X[n] 
& \cong \mathbf{R}f_*\mathbf{R}\mathscr{H}\kern -.5pt om_{\mathcal{O}_X}(\mathcal{O}_X, \omega_X^\bullet)\\ 
& \cong \mathbf{R}\mathscr{H}\kern -.5pt om_{\mathcal{O}_Y}(\mathbf{R}f_*\mathcal{O}_X, \omega_Y^\bullet)\\ 
& \cong \omega_Y[m], 
\end{align*}
which implies that $\mathbf{R}^if_*\omega_X=0$ for all $i \not = n-m$ and $\mathbf{R}^{n-m}f_*\omega_X= \omega_Y$. 
\end{proof}

We also use the following elementary and purely algebraic lemma in the proof of the main theorem. 
\begin{lem}\label{CM lemma}
Let $(R, \mathfrak{m})$ be a Noetherian local domain with dualizing complex and $M$ be a finitely generated $R$-module. Then there exists a nonzero element $c \in R$ such that $c \cdot H^i_{\mathfrak{m}}(M)=0$ for all $i<\dim R$. 
\end{lem}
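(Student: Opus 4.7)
My plan is to use Grothendieck's local duality to translate the desired annihilation of local cohomology into an annihilation of certain Ext modules, and then to exploit the fact that a finitely generated module over a domain whose localization at the generic point vanishes has a nonzero annihilator. Let $d = \dim R$ and let $\omega_R^\bullet$ denote a normalized dualizing complex of $R$ (available by hypothesis). For each integer $i$, set
\[
E_i := \operatorname{Ext}^{-i}_R\bigl(M,\, \omega_R^\bullet\bigr).
\]
Each $E_i$ is a finitely generated $R$-module, and since $\omega_R^\bullet$ has cohomology concentrated in degrees $[-d,0]$ one has $E_i = 0$ unless $0 \le i \le d$. Local duality produces a functorial isomorphism
\[
H^i_\mathfrak{m}(M) \cong \operatorname{Hom}_R\bigl(E_i,\, E_R(R/\mathfrak{m})\bigr),
\]
and because Matlis duality is a faithful contravariant functor on finitely generated modules, $\operatorname{Ann}_R H^i_\mathfrak{m}(M) = \operatorname{Ann}_R E_i$. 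Thus my goal reduces to producing a single nonzero $c \in R$ that annihilates every $E_i$ with $0 \le i < d$.

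Next I would show that each such $E_i$ vanishes after localizing at the generic point $\eta = (0)$ of $\operatorname{Spec} R$. Writing $K := \operatorname{Frac}(R)$ and applying flat base change,
\[
(E_i)_\eta \cong \operatorname{Ext}^{-i}_K\bigl(M_\eta,\, (\omega_R^\bullet)_\eta\bigr).
\]
The localized complex $(\omega_R^\bullet)_\eta$ is a dualizing complex on the field $K$. Since the top cohomology $H^{-d}(\omega_R^\bullet)$ is the canonical module of the $d$-dimensional domain $R$ and therefore has generic rank one, while the remaining cohomology sheaves $H^q(\omega_R^\bullet)$ satisfy $\dim < d$ and hence vanish at $\eta$, we obtain a quasi-isomorphism $(\omega_R^\bullet)_\eta \simeq K[d]$ in $D(K)$. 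Consequently
\[
(E_i)_\eta \cong \operatorname{Ext}^{d-i}_K(M_\eta, K),
\]
which is zero whenever $i \ne d$, and in particular whenever $i < d$.

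Finally, because $R$ is a domain, a finitely generated $R$-module whose generic stalk is zero has a nonzero annihilator. Hence for every $0 \le i < d$ I may pick a nonzero $c_i \in \operatorname{Ann}_R(E_i)$, and the product $c := \prod_{i=0}^{d-1} c_i$ is a nonzero element of $R$ annihilating every $E_i$ with $i < d$; by the reduction above it also annihilates $H^i_\mathfrak{m}(M)$ for all such $i$. I expect the only real obstacle to be the dualizing-complex computation at the generic point: once the quasi-isomorphism $(\omega_R^\bullet)_\eta \simeq K[d]$ is pinned down, the rest is a formal consequence of local duality and elementary properties of torsion modules over a domain.
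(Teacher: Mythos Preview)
Your argument is correct. The paper's own proof is much terser and proceeds differently: it splits into cases, observing that when $\dim M < \dim R$ the ideal $\operatorname{Ann}_R(M)$ is nonzero and any element of it kills every $H^i_\mathfrak{m}(M)$, while for $\dim M = \dim R$ it simply invokes a reference to Schenzel's book. By contrast, you give a unified and self-contained proof via local duality: identifying $H^i_\mathfrak{m}(M)$ with the Matlis dual of the finitely generated module $E_i = \operatorname{Ext}^{-i}_R(M,\omega_R^\bullet)$, showing $(E_i)_\eta = 0$ for $i<d$ by computing $(\omega_R^\bullet)_\eta \simeq K[d]$, and concluding that each such $E_i$ has nonzero annihilator since $R$ is a domain. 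Your route is very likely what underlies the Schenzel citation, but you have written it out in full; it treats both dimension cases at once and makes the use of the dualizing-complex hypothesis explicit, at the price of being longer than a one-line reference. One small remark: the equality $\operatorname{Ann}_R H^i_\mathfrak{m}(M) = \operatorname{Ann}_R E_i$ holds because $E_R(R/\mathfrak{m})$ is an injective cogenerator (so any module and its Matlis dual share the same annihilator), which is a slightly more precise justification than ``faithfulness on finitely generated modules''; the conclusion is of course unaffected.
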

\begin{proof}
If $\dim M<\dim R$, then the annihilator $\mathrm{ann}_R(M)$ of $M$ is a nonzero ideal and its nonzero element is a desired element. 
If $\dim M=\dim R$, then the assertion follows from \cite[p.46]{sc}. 
\end{proof}

\section{Main Theorem}

In this section, we prove our main theorem, that is, Theorem \ref{kollar-inj}. 
Theorem \ref{kollar-inj-intro} is the special case of Theorem \ref{kollar-inj} where $S=\mathrm{Spec}\,k$. 

\begin{thm}\label{kollar-inj}Let $\pi:X\to S$ be a surjective projective morphism of varieties over an $F$-finite field $k$ of characteristic $p>0$, and suppose that $X$ is globally $F$-regular over $S$. 
Let $\mathcal{L}$ be a $\pi$-semi-ample line bundle on $X$ and $\kappa$ denote the relative Iitaka dimension of $\mathcal{L}$, that is, the Iitaka dimension $\kappa(X_{\eta}, \mathcal{L}_{\eta})$ of the generic fiber $X_{\eta}$ of $\pi$. 
Then the following holds:
\begin{enumerate}\renewcommand{\labelenumi}{$($\textup{\roman{enumi}}$)$}
\item $\mathbf{R}^i \pi_*(\omega_X \otimes \mathcal{L})=0$ for all $i \not= n-\kappa$, where $n=\dim X_{\eta}$. 
\item $\mathbf{R}^{n-\kappa} \pi_*(\omega_X \otimes \mathcal{L})$ is a torsion-free $\mathcal{O}_S$-module. 
\item Let $U$ be a non-empty Zariski open subset of $S$ and $s$ be a non-zero section in $H^0(U, \pi_*(\mathcal{L}^{\otimes \ell}))$ for some integer $\ell \ge 1$.
Then 
the map 
\[
\times s: \mathbf{R}^{n-\kappa}\pi_*(\omega_X \otimes \mathcal{L}^{\otimes m})|_{U} \to \mathbf{R}^{n-\kappa}\pi_*( \omega_X \otimes \mathcal{L}^{\otimes (\ell+m)})|_{U}
\] 
induced by the tensor product with $s$ is injective for every $m \geq 1$. 
\end{enumerate}
\end{thm}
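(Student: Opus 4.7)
The plan is to factor $\pi$ through the semi-ample fibration induced by $\mathcal{L}$ and reduce all three assertions to statements about pushforwards on the base of that fibration. Since $\mathcal{L}$ is $\pi$-semi-ample, a sufficiently divisible power $\mathcal{L}^{\otimes N}$ is $\pi$-free and, after Stein factorization, determines an $S$-morphism $g : X \to Y$ with $g_*\mathcal{O}_X = \mathcal{O}_Y$ and $\mathcal{L}^{\otimes N} \cong g^*\mathcal{A}$ for some $h$-ample line bundle $\mathcal{A}$ on $Y$, where $h : Y \to S$ satisfies $\pi = h \circ g$. The relative dimension of $g$ equals $n - \kappa$, and by Remark \ref{global to local}(2), $X$ is globally $F$-regular over $Y$. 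Applying Corollary \ref{gr type vanishing} to $g$ gives $\mathbf{R}^i g_*\omega_X = 0$ for $i \neq n-\kappa$ together with $\mathbf{R}^{n-\kappa}g_*\omega_X = \omega_Y$.

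The technical heart, flagged at the end of the introduction, is to upgrade this to an analogous statement with coefficients in $\mathcal{L}^{\otimes m}$: concretely, that $\mathbf{R}^i g_*(\omega_X \otimes \mathcal{L}^{\otimes m}) = 0$ for $i \neq n - \kappa$ and all $m \geq 1$, and that the sheaves $\mathcal{F}_m := \mathbf{R}^{n-\kappa} g_*(\omega_X \otimes \mathcal{L}^{\otimes m})$ are Cohen--Macaulay on $Y$ and satisfy $\mathcal{F}_{m+N} \cong \mathcal{F}_m \otimes \mathcal{A}$ by the projection formula. The line bundle $\mathcal{L}^{-m}$ is $g$-numerically trivial (since $\mathcal{L}^{\otimes N}$ is a $g$-pullback) and hence $g$-nef, so Lemma \ref{vanishing} gives $\mathbf{R}^j g_*\mathcal{L}^{-m} = 0$ for $j > 0$; Grothendieck duality for $g$ then identifies $\mathbf{R}g_*(\omega_X \otimes \mathcal{L}^{\otimes m})$ with $\mathbf{R}\mathcal{H}om_{\mathcal{O}_Y}(g_*\mathcal{L}^{-m}, \omega_Y)[-(n-\kappa)]$. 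Both the vanishing and Cohen--Macaulayness of $\mathcal{F}_m$ then reduce to Cohen--Macaulayness of $g_*\mathcal{L}^{-m}$ on its support, which I would extract from the globally $F$-regular hypothesis combined with Lemma \ref{CM lemma} applied locally on $Y$ to annihilate the lower local cohomologies.

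Granted these inputs, the Leray spectral sequence collapses to $\mathbf{R}^i \pi_*(\omega_X \otimes \mathcal{L}^{\otimes m}) \cong \mathbf{R}^{i - (n-\kappa)}h_*\mathcal{F}_m$, and the three parts of the theorem become assertions about $\mathcal{F}_m$ on $Y$. For part (i), one needs $\mathbf{R}^j h_*\mathcal{F}_1 = 0$ for $j > 0$: Serre vanishing for the $h$-ample $\mathcal{A}$ yields $\mathbf{R}^j h_*\mathcal{F}_{1+kN} = \mathbf{R}^j h_*(\mathcal{F}_1 \otimes \mathcal{A}^k) = 0$ for $k \gg 0$, and descent from $k \gg 0$ back to $k = 0$ proceeds by embedding $\mathcal{F}_1$ as an $\mathcal{O}_Y$-direct summand of a Frobenius pushforward built from the large-$k$ $\mathcal{F}_{1+kN}$, using the splittings supplied by Definition \ref{defi-gFreg}(ii). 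Part (ii) follows from the torsion-freeness half of the Cohen--Macaulayness of $\mathcal{F}_1$. For part (iii), multiplication by $s$ lifts to a sheaf map $\mathcal{F}_m \to \mathcal{F}_{m+\ell}$ on $Y$, and since $h_*|_U$ is left exact it suffices to prove this sheaf map is injective on $U$; applying global $F$-regularity to the effective Weil divisor $\mathrm{div}(s)$ produces a splitting of the Frobenius factorization of $\cdot s$ that realizes $\mathcal{F}_m$ as a direct summand of the relevant Frobenius pushforward, from which injectivity follows.

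The main obstacle is the Cohen--Macaulayness of $\mathcal{F}_m$ in the second step: the sheaf $g_*\mathcal{L}^{-m}$ need not have full support on $Y$ (for instance if $\mathcal{L}|_{X_{\eta}}$ happens to be a nontrivial torsion line bundle on the generic fiber of $g$), so the duality formula does not by itself force the higher Ext-sheaves to vanish, and controlling depth along the support of $g_*\mathcal{L}^{-m}$ is precisely where the interaction between global $F$-regularity and Lemma \ref{CM lemma} must be set up most carefully.
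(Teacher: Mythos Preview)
Your outline matches the paper's proof almost step for step: factor through the semi-ample fibration $g:X\to Y$, use Lemma~\ref{vanishing} to get $\mathbf{R}^{>0}g_*\mathcal{L}^{-m}=0$, invoke Grothendieck duality to reduce to the Cohen--Macaulayness of $g_*\mathcal{L}^{-m}$, prove that via Lemma~\ref{CM lemma} combined with a Frobenius splitting argument, collapse the Leray spectral sequence, and finish (i) by Serre vanishing plus the splitting of the trace map. Two remarks are worth making.

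First, your flagged obstacle is not real. The sheaf $g_*\mathcal{L}^{-m}$ is the pushforward of a line bundle along a surjective morphism with $g_*\mathcal{O}_X=\mathcal{O}_Y$, so it is torsion-free on $Y$; hence it either vanishes identically or has full support. In either case the local-cohomology argument (the paper's Step~1) goes through verbatim, and \cite[Theorem~3.3.10]{BH} applies to give the Ext-vanishing you need. There is no partial-support case to worry about.

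Second, for (iii) the paper takes a shorter route than your Frobenius-splitting-along-$\mathrm{div}(s)$ idea: since $\mathcal{L}^{\otimes N}=g^*\mathcal{A}$, the section $s^N$ descends to a nonzero section $t\in H^0(Y,\mathcal{A}^{\ell})$, and multiplication by $t$ on the torsion-free sheaf $\mathcal{F}_m$ is automatically injective; via the projection formula this is exactly $\times s^N$, so $\times s$ is injective. Your approach can presumably be made to work, but it is more effort than needed once the Cohen--Macaulayness (hence torsion-freeness) of $\mathcal{F}_m$ is in hand.
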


\begin{proof} We may assume that $S=\mathrm{Spec}\, A$ is a $d$-dimensional affine variety and $X$ is a $(d+n)$-dimensional globally $F$-regular variety, because (i) and (ii) are local properties. 
Then it suffices to show that 
\begin{enumerate}\renewcommand{\labelenumi}{$($\textup{\roman{enumi}'}$)$}
\item $H^i(X, \omega_X \otimes \mathcal{L})=0$ for all $i \not= n-\kappa$, 
\item $H^{n-\kappa}(X, \omega_X \otimes \mathcal{L})$ is a torsion-free $A$-module, and 
\item 
if $s$ is a non-zero section in $H^0(X, \mathcal{L}^{\otimes \ell})$ for some integer $\ell \ge 1$, then 
the map 
\[
\times s: H^{n-\kappa}(X, \omega_X \otimes \mathcal{L}^{\otimes m}) \to  H^{n-\kappa}(X,  \omega_X \otimes \mathcal{L}^{\otimes (\ell+m)})
\] 
induced by the tensor product with $s$ is injective for every $m \geq 1$ as an $A$-module homomorphism. 
\end{enumerate}
Since $\mathcal{L}$ is semi-ample over $S$, there exist a surjective projective morphism $f: X \to  Y$  over $S$ with $f_*\mathcal{O}_X=\mathcal{O}_Y$ and a Cartier divisor $H$ on $Y$ such that $H$ is ample over $S$ and $f^*\mathcal{O}_Y(H) \cong \mathcal{L}^{\otimes r}$ for some integer $r \ge 1$. 
Note that $Y$ is a $(d+\kappa)$-dimensional globally $F$-regular variety by \cite[Proposition 1.2 (2)]{HWY}. 

Now we shall step into the detail of the proof,  we divide into four steps. 
\begin{step}\label{claim-cm}
We show that $f_*\mathcal{L}^{\otimes (-m)}$ is a maximal Cohen-Macaulay $\mathcal{O}_Y$-module for all integers $m \ge 1$.
\end{step}
For Step 1, we may assume that $Y$ is an affine variety, say $Y=\Spec \; R$.  
Fix an arbitrary closed point $y \in Y$ and we will show that the local cohomology $H^i_{\{y\}}(Y, f_*\mathcal{L}^{\otimes (-m)})=0$ vanishes for all $i < \dim Y=d+\kappa$. 
It follows from Lemma \ref{CM lemma} that there exists a nonzero element $c \in R$ such that $c \cdot H^i_{\{y\}}(Y, f_*\mathcal{L}^{\otimes (-b)})=0$ for all $i <d+\kappa$ and all $0 \le b \le r-1$. 
Put $D=\mathrm{div}_X(c)$. 
Since $X$ is globally $F$-regular, there exists an integer $e \ge 1$ such that the composite map 
\[
\mathcal{O}_X \to F^e_*\mathcal{O}_X \hookrightarrow F^e_*\mathcal{O}_X(D)
\]
splits as an $\mathcal{O}_X$-module homomorphism. 
Then tensoring with $\mathcal{L}^{\otimes (-m)}$ and taking the direct image by $f$ yield the splitting of the following composite map 
\[
f_*\mathcal{L}^{\otimes (-m)} \to F^e_*f_*\mathcal{L}^{\otimes (-mp^e)} \xrightarrow{\times F^e_*c} F^e_*f_*\mathcal{L}^{\otimes (-mp^e)} 
\]
as an $R$-module homomorphism. 
Furthermore, taking the $i$-th local cohomology, we find that the following composite map 
\begin{equation}\label{equation 1}
H^i_{\{y\}}(Y, f_*\mathcal{L}^{\otimes (-m)}) \to H^i_{\{y\}}(Y, f_*\mathcal{L}^{\otimes (-mp^e)}) \xrightarrow{\times c}  H^i_{\{y\}}(Y, f_*\mathcal{L}^{\otimes (-mp^e)}) \tag{$\star$}
\end{equation}
is injective. 
We write $mp^e=ar+b$ for integers $a$ and $b$ with $0 \le b \le r-1$. 
Since $f_*\mathcal{L}^{\otimes (-mp^e)} \cong f_*\mathcal{L}^{\otimes (-b)} \otimes \mathcal{O}_Y(-aH)$, we have an isomorphism $H^i_{\{y\}}(Y, f_*\mathcal{L}^{\otimes (-mp^e)}) \cong H^i_{\{y\}}(Y, f_*\mathcal{L}^{\otimes (-b)})$. 
Therefore, for each $i<d+\kappa$, 
\[
H^i_{\{y\}}(Y, f_*\mathcal{L}^{\otimes (-mp^e)}) \xrightarrow{\times c}  H^i_{\{y\}}(Y, f_*\mathcal{L}^{\otimes (-mp^e)})
\]
is the zero map by the choice of $c$, 
which implies by the injectivity of the map $(\star)$ that $H^i_{\{y\}}(Y, f_*\mathcal{L}^{\otimes (-m)})=0$ for all $i<d+\kappa$.

\begin{step}\label{claim1}
For all integers $m \ge 1$, we show the following two properties:  
\begin{enumerate}
\item $\mathbf{R}^{i}f_* (\omega_X \otimes \mathcal{L}^{\otimes m})=0$ for all $i \not= n-\kappa$.
\item $\mathbf{R}^{n-\kappa}f_* (\omega_X \otimes \mathcal{L}^{\otimes m})$ is a maximal Cohen-Macaulay $\mathcal{O}_Y$-module, so in particular a torsion-free $\mathcal{O}_Y$-module. 
\end{enumerate}
\end{step}

Let $\omega_Y^\bullet$ and $\omega_X^\bullet:=f^{!}\omega_Y^{\bullet}$ be normalized dualizing complexes on $Y$ and $X$, respectively.  
Since $X$ and $Y$ are Cohen-Macaulay by Remark \ref{global to local} (3), we have isomorphisms $\omega_X^{\bullet} \cong \omega_X[d+n]$ and $\omega_Y^{\bullet} \cong \omega_Y[d+\kappa]$. 
Note also that $\mathbf{R}f_*\mathcal{L}^{\otimes (-m)}\cong f_*\mathcal{L}^{\otimes (-m)}$ by Lemma \ref{vanishing}. 
Then, thanks to the Grothendieck duality, 
\begin{align*}
\mathbf{R}^{n-\kappa+j}f_*(\omega_X \otimes \mathcal{L}^{\otimes m}) 
& \cong H^{j-d-\kappa} (\mathbf{R} f_*(\omega_X^{\bullet} \otimes^{\mathbf{L}} \mathcal{L}^{\otimes m}))\\
& \cong H^{j-d-\kappa}(\mathbf{R}f_*\mathbf{R}\mathscr{H}\kern -.5pt om_{\mathcal{O}_X} (\mathcal{L}^{\otimes (-m)}, \omega_X^\bullet))\\
& \cong H^{j-d-\kappa} (\mathbf{R}\mathscr{H}\kern -.5pt om_{\mathcal{O}_Y} (\mathbf{R}f_*\mathcal{L}^{\otimes (-m)}, \omega_Y^\bullet))\\
& \cong H^{j}(\mathbf{R}\mathscr{H}\kern -.5pt om_{\mathcal{O}_Y} (f_*\mathcal{L}^{\otimes (-m)}, \omega_Y))\\
& \cong \mathscr{E}\kern -.5pt xt^{j}_{\mathcal{O}_Y} (f_*\mathcal{L}^{\otimes (-m)}, \omega_Y). 
\end{align*}
It follows from Step 1 and \cite[Theorem 3.3.10]{BH} that the Hom sheaf $\mathscr{H}\kern -.5pt om_{\mathcal{O}_Y}(f_*\mathcal{L}^{\otimes (-m)}, \omega_Y)$ is a maximal Cohen-Macaulay $\mathcal{O}_Y$-module and the Ext sheaf $\mathscr{E}\kern -.5pt xt^{j}_{\mathcal{O}_Y} (f_*\mathcal{L}^{\otimes (-m)}, \omega_Y)$ vanishes for all $j>0$. 
Therefore, $\mathbf{R}^{n-\kappa}f_*(\omega_X \otimes \mathcal{L}^{\otimes m})$ is a maximal Cohen-Macaulay module and $\mathbf{R}^{n-\kappa+j}f_*(\omega_X \otimes \mathcal{L}^{\otimes m})=0$ for all $j \ne 0$. 

\begin{step}
We now prove the assertions (ii') and (iii'). 
\end{step}

We consider the Leray spectral sequence: 
\[
E^{i,j}_2=H^i(Y, \mathbf{R}^jf_*(\omega_X \otimes  \mathcal{L}^{\otimes m})) \Rightarrow H^{i+j}(X, \omega_X \otimes \mathcal{L}^{\otimes m}). 
\]
By Step 2 (1), this spectral sequence induces an isomorphism 
\begin{equation}
H^{i}(Y, \mathbf{R}^{n-\kappa}f_*(\omega_X \otimes  \mathcal{L}^{\otimes m}) )\cong H^{i+n-\kappa}(X, \omega_X\otimes \mathcal{L}^{\otimes m})  \tag{$\star \star$}
\end{equation}
for all $i$. 
Then we obtain (ii') from Step 2 (2), because the direct image of a torsion-free sheaf by a surjective morphism is torsion-free. 

Take a nonzero element $t \in H^0(Y, \mathcal{O}_Y(\ell H))$ such that $f^*t=s^r$. 
The map induced by the tensor product with $t$
\[
\times t: H^{0}(Y, \mathbf{R}^{n-\kappa}f_*(\omega_X \otimes  \mathcal{L}^{\otimes m})) \to H^{0}(Y, \mathbf{R}^{n-\kappa}f_*(\omega_X \otimes \mathcal{L}^{\otimes m}) \otimes \mathcal{O}_Y(\ell H))
\]
is injective by Step 2 (2) again. 
Since 
\[\mathbf{R}^{n-\kappa}f_*(\omega_X \otimes \mathcal{L}^{\otimes m}) \otimes \mathcal{O}_Y(\ell H) \cong \mathbf{R}^{n-\kappa}f_*(\omega_X \otimes \mathcal{L}^{\otimes (\ell r+m)})\]
by the projection formula, 
the map $\times t$ can be identified via the isomorphism $(\star \star)$ with the map 
\[
\times s^r: H^{n-\kappa}(X, \omega_X \otimes \mathcal{L}^{\otimes m}) \to  H^{n-\kappa}(X,  \omega_X \otimes \mathcal{L}^{\otimes (\ell r+m)}).
\]
Thus, (iii') follows from the injectivity of the map $\times t$. 

\begin{step}
We finally show the assertion (i'). 
\end{step}

First, we verify that there exists an integer $e_0 \ge 1$ such that 
\[ H^{i+n-\kappa}(X, \omega_X\otimes \mathcal{L}^{\otimes p^{e_0}}) \cong H^{i}(Y, \mathbf{R}^{n-\kappa}f_*(\omega_X \otimes  \mathcal{L}^{\otimes p^{e_0}}) )=0\]
for all $i \ne 0$. 
The first isomorphism is nothing but $(\star \star)$. 
If $e \ge 1$ is an integer and we write $p^{e}=\alpha r+\beta$ for integers $\alpha$ and $\beta$ with $0\leq \beta \leq r-1$, then one has an isomorphism  
\[\mathbf{R}^{n-\kappa}f_*(\omega_X \otimes  \mathcal{L}^{\otimes p^e}) \cong \mathbf{R}^{n-\kappa}f_*(\omega_X \otimes  \mathcal{L}^{\otimes \beta})\otimes \mathcal{O}_X(\alpha H).\]
Thus, the existence of such an $e_0$ follows from the Serre vanishing theorem. 
On the other hand, by the global $F$-regularity of $X$, the map $\mathcal{O}_X \to F^e_*\mathcal{O}_X$ splits for every integer $e>0$. Taking its $\omega_X$-dual, we see that the trace map $\mathrm{Tr}_F^e: F^e_*\omega_X \to \omega_X$ of Frobenius is surjective and splits. 
The splitting of the trace map $\mathrm{Tr}_F^e$ induces the splitting of 
\[\mathrm{Tr}_F^e \otimes \mathcal{L}: F^e_*(\omega_X \otimes \mathcal{L}^{\otimes p^e}) \cong F^e_*\omega_X \otimes \mathcal{L} \to \omega_X \otimes \mathcal{L},\] 
which yields an injective map  
\[ H^{i+n-\kappa}(X, \omega_X \otimes \mathcal{L}) \to H^{i+n-\kappa}(X, \omega_X \otimes \mathcal{L}^{p^e }).
\]
for all $e>0$ and all $i$. 
Therefore, we obtain (i') from the vanishing of $H^{i+n-\kappa}(X, \omega_X \otimes \mathcal{L}^{\otimes p^{e_0}})$ for all $i \ne 0$. 
\end{proof}

We can reformulate Theorem \ref{kollar-inj} in the following way: 
\begin{cor}[\textup{cf.~\cite{EV}, \cite[Theorem 2.12]{fujino2}}]\label{cor-ev}
Let $\pi:X\to S$ be a projective morphism of varieties over an $F$-finite field of characteristic $p>0$ and $\mathcal{L}=\mathcal{O}_{X}(H)$ be a $\pi$-semi-ample line bundle on $X$. 
Suppose that $X$ is globally $F$-regular over $S$. 
Let $D$ be an effective Weil divisor on $X$ such that $tH \sim_{\mathbb{Q}, \pi}D+D'$ for some rational number $t>0$ and some effective Weil divisor $D'$ on $X$. 
Then the natural homomorphism 
\[\mathbf{R}^{i} \pi_*\mathcal{O}_X(\omega_X \otimes \mathcal{L})\to \mathbf{R}^{i} \pi_*\mathcal{O}_X(\omega_X \otimes \mathcal{L}(D))
\]
determined by $D$ is injective for all $i\geq0$.
\end{cor}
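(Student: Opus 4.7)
The plan is to deduce the statement from Theorem \ref{kollar-inj} by factoring the natural map through the canonical section of a suitable power of $\mathcal{L}$. First, by Theorem \ref{kollar-inj}(i), the source $\mathbf{R}^i\pi_*(\omega_X \otimes \mathcal{L})$ vanishes for every $i \ne n-\kappa$, so in those degrees the map is trivially injective. Only the case $i = n-\kappa$ requires work.

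Since injectivity of a sheaf morphism is local on $S$, I would shrink $S$ to a small affine open. Write $\omega_X = \mathcal{O}_X(K_X)$ and $\mathcal{L} = \mathcal{O}_X(H)$. Clearing denominators in $tH \sim_{\mathbb{Q}, \pi} D + D'$ produces an integer $q \ge 1$ with $qt \in \mathbb{Z}$ and a Cartier divisor $N$ on $S$ such that $qtH \sim qD + qD' + \pi^* N$. After shrinking $S$ further so that $\mathcal{O}_S(N)$ becomes trivial, one obtains an isomorphism $\mathcal{O}_X(qD+qD') \cong \mathcal{L}^{\otimes qt}$, and the canonical section of the effective divisor $qD + qD'$ becomes a nonzero element $s \in H^0(X, \mathcal{L}^{\otimes qt})$.

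The key observation is that $s$ factors as the product $s_D \cdot s_{(q-1)D + qD'}$ of the canonical sections of the respective effective Weil divisors. Tensoring with $\omega_X \otimes \mathcal{L}$ then yields a factorization of natural morphisms of divisorial sheaves
\[
\mathcal{O}_X(K_X + H) \xrightarrow{\;\cdot s_D\;} \mathcal{O}_X(K_X + H + D) \xrightarrow{\;\cdot s_{(q-1)D + qD'}\;} \mathcal{O}_X(K_X + H + qD + qD') \cong \omega_X \otimes \mathcal{L}^{\otimes (qt+1)},
\]
whose composite is multiplication by $s$. Applying $\mathbf{R}^{n-\kappa}\pi_*$ and invoking Theorem \ref{kollar-inj}(iii) with $\ell = qt$ and base exponent $1$, one sees that the composite induces an injection; hence so does the first arrow, which is precisely the natural map appearing in the corollary.

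The main obstacle, beyond bookkeeping, is the passage from the relative $\mathbb{Q}$-linear equivalence $tH \sim_{\mathbb{Q},\pi} D + D'$ to a genuine isomorphism $\mathcal{O}_X(qD + qD') \cong \mathcal{L}^{\otimes qt}$ on which Theorem \ref{kollar-inj}(iii) can act. This is handled by clearing denominators with the integer $q$ and by shrinking $S$ so as to trivialize the residual line bundle coming from $\pi^* N$; once this reduction is in place the factorization argument proceeds formally, and the resulting injectivity then glues since the natural map is defined independently of the chosen section.
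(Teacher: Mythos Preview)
Your argument is correct and follows essentially the same route as the paper: clear denominators in the relation $tH \sim_{\mathbb{Q},\pi} D+D'$, factor the natural map determined by $D$ through multiplication by a section of a power of $\mathcal{L}$, and apply Theorem~\ref{kollar-inj}. The only step you omit that the paper makes explicit is replacing $S$ by the image of $\pi$ so that $\pi$ is surjective (a hypothesis of Theorem~\ref{kollar-inj}), but this is a one-line reduction.
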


\begin{proof}Replacing $S$ by the image of $\pi$, we may assume that $\pi$ is surjective. 
Take positive integers $a$ and $b$ such that $aH\sim_{\mathbb{Z}, \pi} bD+bD'$.   
In order to prove the assertion of the corollary, it is enough to show that the map 
\begin{align*}
\mathbf{R}^{i} \pi_*\mathcal{O}_X(\omega_X \otimes \mathcal{L}) \to & \mathbf{R}^{i} \pi_*\mathcal{O}_X(\omega_X \otimes \mathcal{L}(aH))\\ 
\cong & \mathbf{R}^{i} \pi_*\mathcal{O}_X(\omega_X \otimes \mathcal{L}^{\otimes (a+1)})
\end{align*} 
is injective. 
However, once we take a section $s \in H^0(X, \mathcal{L}^{\otimes a})$ corresponding to $aH$, it immediately follows from Theorem \ref{kollar-inj}.  
\end{proof}

\begin{rem}
$(1)$ Since projective toric varieties are globally $F$-regular in positive characteristic (see for example \cite[Proposition 6.4]{smith}), Theorem \ref{kollar-inj} is a generalization of \cite[Theorem 4.1]{fujino} when the variety is projective over an $F$-finite field. 

$(2)$ A projective normal variety $X$ is said to be of Fano type if there exists an effective $\mathbb{Q}$-Weil divisor $\Delta$ on $X$ such that $(X, \Delta)$ is klt with $K_X+\Delta$ anti-ample. 
Schwede-Smith proved in \cite[Theorem 5.1]{SS} that if $X$ is a projective variety of Fano type over a field of characteristic zero, then its modulo $p$ reduction is globally $F$-regular for almost all $p$. 
Thus, Theorem \ref{kollar-inj} gives an alternative proof of Koll\'ar's injectivity theorem \cite[Theorem 2.2]{ko} for varieties of Fano type in characteristic zero.  
\end{rem}


\end{document}